\DeclareFontFamily{OT1}{rsfs}{}
\DeclareFontShape{OT1}{rsfs}{n}{it}{<-> rsfs10}{}
\DeclareMathAlphabet{\curly}{OT1}{rsfs}{n}{it}
\renewcommand\O{\mathcal O}
\newcommand\PP{\mathbb P}
\newcommand\C{\mathbb C}
\newcommand\cC{\mathcal C}
\newcommand\into{\hookrightarrow}
\newcommand\ito{\ar@{^{ (}->}[r]}
\renewcommand\_{^{}_}
\newcommand\take{\backslash}
\newfont{\bigtimesfont}{cmsy10 scaled \magstep5}
\newcommand{\bigtimes}{\mathop{\lower0.9ex\hbox{\bigtimesfont\symbol2}}}
\newcommand\bull{{\scriptscriptstyle\bullet}}
\renewcommand\bar{\overline}
\newcommand\id{\operatorname{id}}
\renewcommand\hom{\curly H\!om}
\newcommand\Hilb{\operatorname{Hilb}}
\newcommand\Sym{\operatorname{Sym}}
\newcommand\beq[1]{\begin{equation}\label{#1}}
\newcommand\eeq{\end{equation}}
\newcommand\beqa{\begin{eqnarray}}
\newcommand\eeqa{\end{eqnarray}}
\newcommand\beqas{\begin{eqnarray*}}
\newcommand\eeqas{\end{eqnarray*}}
\makeatletter \@addtoreset{equation}{section} \makeatother
\newtheorem{thm}[equation]{Theorem}
\newtheorem{prop}[equation]{Proposition}
\title{A short proof of the G\"ottsche conjecture}
\author[M. Kool, V. Shende and R. P. Thomas]{Martijn Kool, Vivek Shende and Richard Thomas}
\begin{document}
\begin{abstract}
We prove that for a sufficiently ample line bundle $L$ on a surface $S$, the number
of $\delta$-nodal curves in a general $\delta$-dimensional linear system is given by
a universal
polynomial of degree $\delta$ in the four numbers $L^2,\,L.K_S,\,K_S^2$ and $c_2(S)$.

The technique is a study of Hilbert schemes of points on curves on a surface, using
the BPS calculus of \cite{PT3} and the computation of tautological integrals on Hilbert
schemes by Ellingsrud, G\"ottsche and Lehn.

We are also able to weaken the ampleness required, from G\"ottsche's $(5\delta-1)$-very
ample to $\delta$-very ample.
\end{abstract}
\maketitle


\section{Introduction}
Throughout this paper we fix a compact complex surface $S$ with very ample line bundle $L$ with no higher
cohomology. Curves $C$ in the linear system $\PP:=\PP(H^0(L))$ have arithmetic genus
$g=g(C)$, where $2g-2=L.(L+K_S)$.

Call a (possibly reducible) curve $\delta$-nodal if it is smooth away from $\delta$ points at which it looks analytically like $\{xy=0\}\subset\C^2$. For sufficiently ample $L$, $\delta$-nodal curves occur in codimension $\delta$, and it is a classical question how many such curves appear in a general $\delta$-dimensional
linear subsystem $\PP^\delta\subset\PP$: see \cite{KP2} for a history going back to 1848 and Steiner, Cayley and Salmon. For $S=\PP^2$, Ran \cite{R}, and later Caporaso-Harris \cite{CH},
found recursions that determine these Severi degrees. For general $S$ it was conjectured \cite{DFI, G, KP1, V} that they should be topological -- in fact a universal
polynomial of degree $\delta$ in the numbers $L^2,\,L.K_S,\,K_S^2$ and $c_2(S)$. This is often referred to as the
G\"ottsche conjecture; in fact G\"ottsche \cite{G} gave a partial conjectural form for the generating series (motivated by the Yau-Zaslow formula \cite{YZ} and involving quasimodular
forms) about which we have nothing to say in this paper. It is plausible that our recursive formulae could be used to identify these generating series, but it seems unlikely: the
same integrals are evaluated for $S=K3$ in \cite{KY, MPT} but by an indirect method.

There are now proofs for $\PP^2$ \cite{Ch, FM} and other surfaces \cite{Be, BL1, BL2,
Ka, Liu1, Liu2}
using wildly different techniques. A completely general algebro-geometric proof using degeneration has recently been found by Tzeng \cite{Tz}.

Our method of counting is rather simpler; it is a generalisation of the following elementary standard technique
for $\delta=1$. Suppose we have a pencil of curves, a finite number of which have one node (and the rest are smooth). The smooth curves have Euler characteristic $2-2g$, while the nodal curves have Euler characteristic $(2-2g)+1$. Thus we can determine the number of nodal curves from the topological Euler characteristic of the universal curve $\cC\to\PP^1$:
\beq{easy}
e(\cC)=(2-2g)e(\PP^1)+\#(1\text{-nodal curves}).
\eeq
Since $\cC$ is the blow up of $S$ at $L^2$ points this gives the classical formula
$$
\#(1\text{-nodal curves})=c_2(S)+L^2+2L.(L+K_S).
$$

For a curve $C$, define $e_i:=e\big(C^{[i]}\big)$, where $C^{[i]}$ is the Hilbert scheme of $i$ points on $C$. Then the point is that
\beq{bps1}
e_1-(2-2g)e_0
\eeq
is 1 when $C$ is 1-nodal, and 0 if $C$ is smooth. Summing over all the curves in the pencil gives \eqref{easy}.

Similarly for arbitrary $\delta$ there is a linear formula in the $e_i,\ 0\le i\le\delta$, which gives 1 for $\delta$-nodal curves and 0 for
all curves of geometric genus $\bar g>g-\delta$ (those which are less singular in some sense). The result -- Theorem \ref{BPS} below --
is taken from \cite{PT3}, where it was proved in the context of stable pairs (these are in some sense dual to points of the Hilbert scheme). 

Summing over $\PP^\delta$ gives a
formula for the number of $\delta$-nodal curves in terms of the Euler characteristics of the \emph{relative Hilbert schemes} of the
universal curve. In turn we compute these
Euler characteristics of relative Hilbert schemes in terms of certain tautological integrals
over the Hilbert schemes of points on the surface. These can be handled by a recursion
due to Ellingsrud, G\"ottsche and Lehn \cite{EGL}. (G\"ottsche also expressed
the counts of nodal curves in terms of integrals on the Hilbert scheme, but to which
\cite{EGL} does not apply. They are evaluated by degeneration techniques by Tzeng in
\cite{Tz}.)

\subsection*{Motivation} This paper arose from our project \cite{KT} to define the
invariants counting nodal curves in terms of virtual classes, thus extending them from
the very
ample case to more general curve classes. There are two obvious ways to do this, using
Gromov-Witten theory or stable pairs (using equivariant \emph{reduced} 3-fold virtual
classes as in \cite{MPT}). These should be related by the stable pair version \cite{PT1}
of the famous MNOP conjecture \cite{MNOP1}.
To see the $\delta$-nodal curves in GW theory we change the genus and look at stable maps from
curves of genus $g-\delta$. Via the the MNOP conjecture, the analogue in stable pairs
theory is to
allow up to $\delta$ free points to roam around the curves and use the BPS calculus
of \cite{PT3} to identify the lower genus curves.
This is essentially what we do in this paper. But we forget about the motivation, working
from the start with Hilbert schemes of points on curves. We also confine ourselves to
the classical case of very ample curves, so that no virtual cycles enter.

\subsection*{Notation}
For a reduced curve $C$ of arithmetic genus $g=g(C)$, we let $\bar g:=g(\bar C)$ be its geometric genus: the genus of its normalisation $\bar C\to C$. If $\bar C$ is disconnected
with connected components $C_i$ then its genus is $1+\sum_i(g(C_i)-1)$, which can be
negative. The total Chern class of a bundle $E$ is denoted by $c_\bull(E)$.
A line bundle $L$ on $S$ is said to be $n$-very ample if $H^0(L)\to H^0(L|_Z)$
is onto for all length-$(n+1)$ subschemes $Z\subset S$.
We use $X^{[k]}$ for the Hilbert scheme of $k$ points on any variety $X$, but $\Hilb^k(X/B)$ for the \emph{relative} Hilbert scheme of points on the fibres of a family $X\to B$.
Using the obvious projections and the universal subscheme $\mathcal Z_k\subset X\times
X^{[k]}$ we get the rank $k$ tautological bundle
$$
L^{[k]}:=\pi_{2*}\big((\pi_1^*L)|_{\mathcal Z_k}\big) \quad\mathrm{on}\ X^{[k]}.
$$

\subsection*{Acknowledgements} We would like to thank Daniel Huybrechts, Rahul Pandharipande, Dmitri Panov for useful conversations, and Samuel Boissi\`ere and Julien Grivaux for pointing out the reference \cite{EGL}.

\section{Sufficiently ample linear systems}
We begin with a description of the curves in a general sufficiently ample linear system,
strengthening a result of G\"ottsche \cite[Proposition 5.2]{G}. The bound on the geometric genus is key to our results.

\begin{prop} \label{Got}
If $L$ is $\delta$-very ample then the general $\delta$-dimensional linear system $\PP^\delta\subset\PP(H^0(L))$ contains a finite number of $\delta$-nodal curves appearing with multiplicity 1. All other curves are reduced with geometric genus $\bar g>g-\delta$.
\end{prop}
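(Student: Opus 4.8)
\medskip\noindent\emph{A proposed proof.}
The idea is to realise the general $\PP^\delta$ as a general $\mathrm{PGL}(H^0(L))$-translate of a fixed $\delta$-plane in $\PP$, so that by the Kleiman--Bertini theorem it meets transversally every fixed smooth locally closed subvariety of $\PP$ and avoids every subvariety of codimension $>\delta$. It then suffices to establish two things: (i) the locus $V_\delta\subset\PP$ of curves with exactly $\delta$ nodes and no other singularity is smooth of codimension exactly $\delta$; and (ii) the ``bad locus'' $\mathcal B\subset\PP$ --- the non-reduced curves together with the reduced curves $C$ with $\overline g(C)\le g-\delta$ that are not $\delta$-nodal --- has codimension $\ge\delta+1$. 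Granting these: a limit of $\delta$-nodal curves is either non-reduced or reduced with total $\delta$-invariant $\ge\delta$, so $\overline{V_\delta}\setminus V_\delta\subset\mathcal B$; hence a general $\PP^\delta$ misses $\mathcal B$ and meets $V_\delta$ transversally in finitely many reduced points, which is precisely the assertion. (No separate non-emptiness of $V_\delta$ is needed, as the stated finite number may be zero.)

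For (i) I would invoke the deformation theory of nodal curves on a surface: at a $\delta$-nodal $C$ with node set $N=\{p_1,\dots,p_\delta\}$, the Severi locus is smooth of codimension $\delta$ near $[C]$ as soon as $H^1(S,\mathcal I_N\otimes L)=0$, i.e. as soon as the length-$\delta$ scheme $N$ imposes independent conditions on $|L|$ --- and this holds because $L$ is $\delta$-very ample. So $V_\delta$ is smooth of codimension exactly $\delta$ at each of its points.

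The heart of the matter is (ii), and this is where $\delta$-very ampleness must be used exactly rather than extravagantly. Let $C$ be reduced with singularities $q_1,\dots,q_s$ of analytic types $T_1,\dots,T_s$, with $\sum_j\delta(T_j)\ge\delta$ but $C$ not $\delta$-nodal. Writing $\mathcal I^{es}_C\subset\mathcal O_S$ for the equisingularity ideal of $C$ --- supported on the $q_j$, containing $\mathcal I_C$, and of colength $\tau^{es}(C):=\sum_j\tau^{es}(T_j)$ --- the equisingular stratum $V_{ES}(C)$ through $[C]$ has Zariski tangent space $H^0(\mathcal I^{es}_C\otimes L)$ modulo the equation of $C$, so, using $H^1(L)=0$,
\[
\dim V_{ES}(C)\ \le\ h^0(\mathcal I^{es}_C\otimes L)-1\ =\ \dim\PP-\tau^{es}(C)+h^1(S,\mathcal I^{es}_C\otimes L).
\]
Two standard facts then finish the reduced part. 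First, $\tau^{es}(T)\ge\delta(T)$ for every plane-curve singularity, with equality exactly for an ordinary node; hence $\tau^{es}(C)\ge\sum_j\delta(T_j)+\#\{\text{non-nodes among the }T_j\}\ge\delta+1$ (if every $T_j$ is a node then $C$ not being $\delta$-nodal forces $s\ge\delta+1$, so again $\tau^{es}(C)=s\ge\delta+1$). Second, since $\tau^{es}(C)\ge\delta+1$ we may choose a length-$(\delta+1)$ subscheme of $V(\mathcal I^{es}_C)$; by $\delta$-very ampleness it imposes $\delta+1$ independent conditions on $|L|$, hence so does $V(\mathcal I^{es}_C)$, giving $h^1(\mathcal I^{es}_C\otimes L)\le\tau^{es}(C)-(\delta+1)$. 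Substituting yields $\dim V_{ES}(C)\le\dim\PP-\delta-1$. Since the locus $W_\delta$ of reduced curves with $\delta$-invariant $\ge\delta$ is locally closed, and this bound shows every component of $W_\delta$ of dimension $\dim\PP-\delta$ is generically $\delta$-nodal (otherwise $\tau^{es}>\delta$ and the dimension drops), the reduced part $W_\delta\setminus V_\delta$ of $\mathcal B$ has dimension $\le\dim\PP-\delta-1$.

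For the non-reduced curves, let $D$ be an irreducible component of $C$ of multiplicity $\ge2$, so $C=2D+C'$ with $C'\in|L-2D|$, and $C$ lies in the family $2D+|L-2D|$ of dimension $\le h^0(\mathcal O_S(D))-1+h^0(L-2D)-1$. Multiplying $H^0(L-2D)$ by a fixed section of $\mathcal O_S(D)$ and $H^0(\mathcal O_S(D))$ by a fixed equation of $C'$ embeds both into $H^0(L-D)$ with one-dimensional overlap, so $h^0(\mathcal O_S(D))+h^0(L-2D)\le h^0(L-D)+1$; and $h^0(L)-h^0(L-D)\ge\delta+1$ because $\delta+1$ general points of $D$ impose independent conditions on $|L|$. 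Hence this family has codimension $\ge\delta+1$, and summing over the finitely many numerical classes of $D$ with $L-2D$ effective completes (ii). I expect the only real subtlety to lie in (ii): pinning down the equisingularity ideal so that both the tangent-space estimate and the local inequality $\tau^{es}(T)\ge\delta(T)$ with its equality case are available --- everything else is Kleiman--Bertini, a multiplication map, and the remark that a length-$n$ subscheme imposes independent conditions once $L$ is $(n-1)$-very ample, which is exactly why $\delta$-very ampleness is the right hypothesis.
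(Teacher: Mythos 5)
Your handling of the \emph{reduced} curves is correct and is a genuinely different route from the paper's: where the paper bounds the equigeneric locus by its tangent cone $H^0(L\otimes A/J)$ and then deals with the genus-exactly-$(g-\delta)$ non-nodal curves by an induction on the Milnor number using L\^e--Ramanujam, you bound each equisingular stratum by its Zariski tangent space $H^0(\mathcal I^{es}\otimes L)/\langle s\rangle$, use the fact that the colength of $ES$ exceeds the $\delta$-invariant unless all singularities are nodes (the same fact the paper quotes as $ES\subset A$ being strict unless nodal), and use $\delta$-very ampleness to bound $h^1(\mathcal I^{es}\otimes L)$. Together with the (standard, but worth stating) finiteness of the equisingularity types occurring in $|L|$, this does give codimension $\ge\delta+1$ for all bad reduced curves in one stroke, which is arguably cleaner than the paper's two-case argument. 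Your step (i) is likewise fine and equivalent to the paper's.

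The genuine gap is in the non-reduced count. You parametrize curves with a doubled component by $|D|\times|L-2D|$ and then ``sum over the finitely many numerical classes of $D$''; but within a fixed numerical class the divisor $D$ moves in a family of dimension up to $\dim|D|+h^1(\O_S)$, since numerically (or algebraically) equivalent divisors need not be linearly equivalent. On an irregular surface --- e.g.\ an abelian surface, where the translates of a polarization curve with $h^0=1$ already form a $2$-dimensional family --- your count undercounts by $q=h^1(\O_S)$ and only yields codimension $\ge\delta+1-q$ for the non-reduced locus, so (ii) is not established in general; the G\"ottsche setting very much includes irregular surfaces. The paper sidesteps this with a pointwise infinitesimal estimate: at a non-reduced curve $\{s=0\}$ the tangent cone to the non-reduced locus lies in the kernel of $H^0(L)/\langle s\rangle\to H^0(L|_{C^{red}})$, which has codimension $\ge\delta+1$ by $\delta$-very ampleness --- an argument insensitive to $\Pic^0$, and the natural repair of your proof. (Separately, your ``one-dimensional overlap'' justification of $h^0(\O_S(D))+h^0(L-2D)\le h^0(L-D)+1$ breaks down when $D$ also divides $C'$, i.e.\ for components of multiplicity $\ge3$; the inequality itself is the standard $h^0(A)+h^0(B)\le h^0(A+B)+1$ for line bundles with sections on an integral surface, so that slip is cosmetic --- but it does not supply the missing $h^1(\O_S)$ dimensions.)
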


\begin{proof}
We recall some deformation theory of singularities of curves in surfaces; an excellent reference is \cite{DH}. Since everything is local we consider the germ of a reduced plane curve $C=\{f=0\}\subset\C^2$  about its singular points. This has a miniversal deformation space $\mathsf{Def}= H^0(\O_C/J)$,
where $J=(\partial f/\partial x,\partial f/\partial y)$ is the Jacobian ideal. (Denoting the composition $\C[x,y]\to H^0(\O_C)\to H^0(\O_C/J)$ by  $\epsilon\mapsto[\epsilon]$, to first order the construction associates $[\epsilon]\in\mathsf{Def}$ to the deformation
$\{f+\epsilon=0\}$ of $\{f=0\}$.) Inside $\mathsf{Def}$ is the \emph{equigeneric locus} of deformations
with the same geometric genus; it has tangent cone $H^0(A/J)\subset H^0(\O_C/J)$ where $A\subset\O_C$ is the \emph{conductor ideal}  \cite{T} of colength $g-\bar g$. Inside that is the equisingular locus, which is smooth with tangent space $H^0(ES/J)\subset H^0(\O_C/J)$ where $ES\subset\O_C$ is the \emph{equisingular ideal}. The inclusion $ES\subset A$ is \emph{strict} unless the singularities are nodal.

First we show that the locus of $\delta$-nodal curves in $\PP:=\PP(H^0(L))$ is smooth of codimension $\delta$, from which the first result of the Proposition follows. Fix $[s]\in\PP$ corresponding to a $\delta$-nodal curve $C\subset S$. The germ of $\PP$ about
$[s]$ maps to the miniversal deformation space $\mathsf{Def}$ of the singularities of $C$.
Its tangent map $T_{[s]}\PP=H^0(L)/\langle s\rangle\to H^0(L\otimes\O_C/J)$ is onto because length$(\O_C/J)=\delta$ and $L$ is $(\delta-1)$-very ample.
Therefore this is a smooth map. Inside $\mathsf{Def}$ the locus of $\delta$-nodal curve germs is smooth of codimension $\delta$, so the locus of $\delta$-nodal curves in $\PP$ is also smooth  of codimension $\delta$.

Now we show in turn that curves of the following kind
\begin{itemize}
\item nonreduced,
\item reduced with geometric genus $<g-\delta$, and
\item reduced with geometric genus $g-\delta$ with singularities other than nodes,
\end{itemize}
sit in subschemes of $\PP$ of codimension $>\delta$.

$\bullet$ Fix a nonreduced curve $C$ cut out by $s\in H^0(L)$, with underlying reduced curve $C^{red}$. Nearby curves in $\PP$ are of the form $s+\epsilon=0$ for small $\epsilon$ in a fixed complement to $\langle s\rangle\subset H^0(L)$. A local computation shows that where $\epsilon|_{C^{red}}\ne0$, the resulting curve is reduced. So the tangent cone to the nonreduced curves lies in the kernel of $T_{[s]}\PP=H^0(L)/\langle s\rangle\to H^0(L|_{C^{red}})$. This map need not be onto, but its composition with the projection to any length-$(\delta+1)$ subscheme of $C^{red}$ is, since $L$ is
$\delta$-very ample. Therefore its kernel has codimension at least $\delta+1$, so the nonreduced curves have codimension $>\delta$ inside $\PP$, as required.

$\bullet$ Given a curve $C$ of geometric genus $\bar g<g-\delta$ defined by $s\in H^0(L)$, the tangent cone to the
equigeneric locus of curves of
the same geometric genus in $\PP(H^0(L))$ is given by the kernel of $H^0(L)/\langle
s\rangle\to H^0(L\otimes(\O_C/A))$. This map need not be onto, but its composition with the projection
to any length-$(\delta+1)$ subscheme of $\O_C/A$ is, since $L$ is
$\delta$-very ample. Therefore the kernel has codimension at least $\delta+1$, and the equigeneric locus has
codimension $>\delta$.

$\bullet$  Finally we deal with reduced curves of geometric genus $g-\delta$ which are \emph{not} $\delta$-nodal.
Let $S_m\subset\PP$ be the locus of curves of geometric genus $g-\delta$ whose total Milnor number
(summed over the singular points of $C$) is $m$. For plane curve singularities, it is a classical fact
that $m\ge\delta$ with equality only for the $\delta$-nodal curves. Thus $S_\delta$ is the locus of nodal curves,
which we have observed is smooth of codimension $\delta$.

For $m>\delta$ we will show that $S_m$ is in the closure of
$S_\delta\cup S_{\delta+1}\cup\cdots\cup S_{m-1}$ and so has codimension $\ge\delta+1$ by induction, as required.
In fact it will be enough to prove that for each $C\in S_m$, some equigeneric deformation of $C$ is not equisingular, and so, by
a classical result \cite{LR}, has smaller Milnor number.

Since $L$ is $(\delta-1)$-very ample, $H^0(L)/\langle s\rangle$ surjects onto $H^0(L\otimes\O_C/A)$. Therefore the image of the germ of $\PP$  about $[s]$ is \emph{transverse} to the equigeneric locus of curves in $\mathsf{Def}$.
Since $C$ has non-nodal singularities, $A/ES$ has length $\ge1$, so there is a nonzero map $A/ES\to\O_p$, where $p\in C$ one of the singular points. Its kernel defines an ideal $I$ with
$J\subset ES\subset I\subset A\subset\O_C$ and $A/I$ one dimensional.
By the $\delta$-very ampleness of $L$,  $H^0(L)/\langle s\rangle$ surjects onto $H^0(L\otimes\O_C/I)$. Therefore $\PP$ maps onto
an equigeneric 1-dimensional family of deformations of $C$ whose tangent cone in $H^0(C,L\otimes A/J)$ is
transverse to $H^0(C,L\otimes I/J)$ (which by construction contains the tangent space to the smooth locus of equisingular deformations). So we get our equigeneric family in which the Milnor number drops, as required.
\end{proof}

\section{BPS calculus}

\begin{prop} \emph{\cite[Appendix B.1]{PT3}} \label{1}
For $C$ a reduced curve embedded in a smooth surface, the generating series of Euler characteristics
$$
q^{1-g}\sum_{i=0}^\infty e\big(C^{[i]}\big)q^i
$$
can be written uniquely in the form
$$
\sum_{r=g(\bar C)}^{g(C)}n_{r,C\ }q^{1-r}(1-q)^{2r-2}
$$
for integers $n_{r,C},\ r=\bar g,\ldots,g$.

Moreover, the $n_{r,C}$ are determined by only the numbers $e\big(C^{[i]}\big),\ i=1,\ldots,g-\bar
g$ by a linear relation. Explicitly, $n_{r,C}=0$ for $r>g$ and $n_{g,C}=1$, while for
$r<g$ the $n_{r,C}$ are determined inductively by
$$
n_{g-r,C}=e\big(C^{[r]}\big)-\sum_{i=g-r+1}^gn\_{i,C}\ e(\Sym^{r-(g-i)}\Sigma_i),
$$
where $\Sigma_i$ is any smooth proper curve of genus $i$.  $\hfill\square$
\end{prop}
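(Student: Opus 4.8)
The plan is to isolate a purely formal part -- which simultaneously yields existence, uniqueness, integrality and the stated recursion -- from a single geometric input about the shape of the series $\sum_i e(C^{[i]})q^i$, and then to reduce that input, by a stratification, to a local statement about plane curve singularities. Throughout one works in the ring $\Q[[q]][q^{-1}]$ of Laurent series with a pole of finite order.

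\emph{Formal part.} Put $B_r:=q^{1-r}(1-q)^{2r-2}$. Each $B_r$ has $q$-adic order exactly $1-r$ with leading coefficient $1$, so $\{B_r\}_{r\in\Z}$ is ``triangular'': any $F$ of order $1-g$ has a unique, $q$-adically convergent (a priori infinite) expansion $F=\sum_{r\le g}c_rB_r$, the $c_r$ being read off order by order. Applying this to $F:=q^{1-g}\sum_{i\ge0}e(C^{[i]})q^i$, whose lowest coefficient is $e(C^{[0]})=1$, gives $n_{r,C}=0$ for $r>g$ and $n_{g,C}=1$. For the recursion, recall that on a smooth curve $C^{[k]}=\Sym^kC$, that $e(\Sym^kX)$ depends only on $e(X)$, and Macdonald's classical computation of the cohomology of symmetric products of curves, which upon setting the Poincar\'e variable to $-1$ gives $\sum_k e(\Sym^k\Sigma_i)\,q^k=(1-q)^{-e(\Sigma_i)}=(1-q)^{2i-2}$ for any smooth proper curve $\Sigma_i$ of genus $i\in\Z$ (disjoint unions of $\PP^1$'s realising $i\le0$). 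Multiplying the expansion of $F$ by $q^{g-1}$ and putting $j=g-r$ turns it into
\[
\sum_r e(C^{[r]})q^r=\sum_{j\ge0}n_{g-j,C}\,q^j(1-q)^{2(g-j)-2}=\sum_{j\ge0}n_{g-j,C}\,q^j\sum_k e(\Sym^k\Sigma_{g-j})q^k ;
\]
comparing coefficients of $q^r$ and isolating the $j=r$ term (for which $\Sym^0$ is a point) gives exactly the displayed recursion for $n_{g-r,C}$. As every $e(C^{[r]})$ and $e(\Sym^k\Sigma_i)$ is an integer and $n_{g,C}=1$, induction yields $n_{r,C}\in\Z$. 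The only thing still to prove is that $n_{r,C}=0$ for $r<\bar g$.

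\emph{Reduction to the local case.} A length-$i$ subscheme $Z\subset C$ decomposes uniquely into the part of $Z$ on the smooth locus $C^{\mathrm{sm}}$ and the parts of $Z$ at the finitely many singular points $p_1,\dots,p_k$; this stratifies $C^{[i]}$ with strata $(C^{\mathrm{sm}})^{[a]}\times\prod_j\Hilb^{b_j}_{p_j}(C)$, $a+\sum_jb_j=i$, where $\Hilb^b_p(C)$ is the punctual Hilbert scheme. Since $e$ is additive over strata and multiplicative over products,
\[
\sum_i e(C^{[i]})q^i=\Big(\sum_a e\big((C^{\mathrm{sm}})^{[a]}\big)q^a\Big)\prod_j Z_{p_j}(q),\qquad Z_p(q):=\sum_b e\big(\Hilb^b_p(C)\big)q^b .
\]
As $C^{\mathrm{sm}}$ is smooth, the first factor equals $(1-q)^{-e(C^{\mathrm{sm}})}$ (divide the analogous identity for a smooth compactification of $C^{\mathrm{sm}}$ by the one for the finitely many points added), and $e(C^{\mathrm{sm}})=e(\bar C)-\sum_j d_j=(2-2\bar g)-\sum_j d_j$ with $d_j$ the number of branches at $p_j$. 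Hence
\[
\sum_i e(C^{[i]})q^i=(1-q)^{2\bar g-2}\prod_j\big[(1-q)^{d_j}Z_{p_j}(q)\big].
\]

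\emph{Local statement and conclusion.} The crux -- the step I expect to be the main obstacle -- is the following assertion about a plane curve singularity $(C,p)$ with $d$ branches and $\delta$-invariant $\delta_p$: the series $(1-q)^dZ_p(q)$ is a \emph{palindromic} polynomial of degree $2\delta_p$ with constant term $1$. I would prove this from the conductor/lattice description of the punctual Hilbert scheme: writing $\O_{C,p}\subset\widetilde{\O}=\bigoplus_l\C[[t_l]]$ with conductor $\mathfrak c$ of colength $2\delta_p$ in $\widetilde{\O}$, for $b\gg0$ a colength-$b$ ideal is recovered from the $\widetilde{\O}$-submodule it generates, so $\Hilb^b_p(C)$ fibers over the admissible branch-multiplicities $(a_1,\dots,a_d)$ with $\sum_la_l=b+\mathrm{const}$, the fibers running through only finitely many fixed varieties; as the number of such $(a_l)$ is polynomial in $b$ of degree $d-1$, $(1-q)^dZ_p$ is a polynomial of degree $\le 2\delta_p$. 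The palindromy is a functional equation for $Z_p$ expressing Gorenstein (Serre) duality on the curve germ, which pairs the colength-$b$ locus with the colength-$(2\delta_p-b)$ locus and thereby also pins the degree down to exactly $2\delta_p$; alternatively this local fact may be quoted, being implicit in the compactified-Jacobian literature and equivalent to the stable-pairs rationality-and-functional-equation statements. Granting it, $P_C:=\prod_j(1-q)^{d_j}Z_{p_j}(q)$ is palindromic of degree $2\sum_j\delta_{p_j}=2(g-\bar g)=:2N$ with $P_C(0)=1$, using the genus-defect formula $\sum_j\delta_{p_j}=g-\bar g$. Finally, with $w:=q^{-1}(1-q)^2$ one checks $w^sB_{\bar g}=B_{\bar g+s}$, so $\mathrm{span}_\Q(B_{\bar g},\dots,B_g)=B_{\bar g}\cdot\Q[w]_{\le N}$; and $F=q^{\bar g-g}P_C(q)\,B_{\bar g}$, where $q^{-N}P_C(q)$ is, by palindromy, a Laurent polynomial in $q$ invariant under $q\mapsto q^{-1}$ of degree $\le N$, i.e.\ an element of $\Q[q+q^{-1}]_{\le N}=\Q[w]_{\le N}$. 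Therefore $F\in\mathrm{span}_\Q(B_{\bar g},\dots,B_g)$, that is $n_{r,C}=0$ for $r<\bar g$, which completes the proof.
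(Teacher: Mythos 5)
Your formal part (triangularity of the basis $B_r=q^{1-r}(1-q)^{2r-2}$, the recursion via Macdonald's formula, integrality by induction) is correct, and your stratification by the singular points, reducing the one substantive claim $n_{r,C}=0$ for $r<\bar g$ to the assertion that $P_C=\prod_j(1-q)^{d_j}Z_{p_j}(q)$ is a palindromic polynomial of degree $2(g-\bar g)$ with constant term $1$, is a genuine equivalence and a clean way to organise the statement. Be aware, though, that the paper does not prove this Proposition at all: it quotes \cite[App.\ B.1]{PT3}, remarking that the content is Serre duality along the fibres of the Abel--Jacobi map $Z\mapsto I_Z^*$ to the compactified Jacobian, with a self-contained Hilbert-scheme account in \cite[Prop.\ 2]{Sh}. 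So everything in your proposal hinges on the ``local statement'' you flag as the main obstacle, and that statement is precisely the nontrivial content of the Proposition.

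Your sketch of that crux does not hold up as written. First, a colength-$b$ ideal is \emph{not} recovered from the $\widetilde{\O}$-submodule it generates: already for the cusp $\C[[t^2,t^3]]$ the ideals generated by $t^{2k}+at^{2k+1}$ together with a high power of $t$ all span the same $\C[[t]]$-module for every $a$, and such families exist in every large colength; moreover, even granting that $e(\Hilb^b_p(C))$ is eventually polynomial in $b$ of degree $d-1$ (which is true), this only shows $(1-q)^dZ_p$ is \emph{some} polynomial --- the degree bound $2\delta_p$ is not a counting consequence but part of the duality statement itself. Second, Gorenstein duality does not ``pair the colength-$b$ locus with the colength-$(2\delta_p-b)$ locus'': for the node, $e(\Hilb^0_p)=1$ while $e(\Hilb^2_p)=2$. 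The true functional equation is about the whole series, $Z_p(1/q)=(-1)^d\,q^{\,d-2\delta_p}Z_p(q)$, and its known proofs (global in \cite{PT3} and \cite{Sh}, local in the Maulik--Yun/Migliorini--Shende style) go through the Abel--Jacobi map, where Serre duality exchanges fibres --- projective spaces of jumping dimension --- in complementary degrees $n$ and $2g-2-n$, followed by a bookkeeping argument with that projective-bundle structure; nothing so direct as a pairing of punctual Hilbert schemes can work. Finally, your fallback of ``quoting the stable-pairs rationality-and-functional-equation statements'' is quoting the very result being proved. So either reproduce the Abel--Jacobi/Serre-duality argument (your reduction then localises it nicely), or cite \cite{PT3} or \cite{Sh} as the paper does; as it stands the key step is asserted, not proved.
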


Proposition \ref{1} is a consequence of Serre duality for the fibres of the Abel-Jacobi
map taking a subscheme $Z\subset C$ to the sheaf $I_Z^*$. Its force is that these formulae give $n_r=0$ for all $r<\bar g$, \emph{regardless of
the singularities of $C$}.

The result says that at the level of Euler characteristics of Hilbert schemes, the singular
curve $C$ looks like a disjoint union of smooth curves $\Sigma_i$ of genera $i$ between $\bar g$ and $g$; the number of genus $i$ being $n_{i,C}$. (Note that the generating series for $\Sigma_i$ is precisely $q^{1-i}(1-q)^{2i-2}$. The example of nodal
curves in Proposition
\ref{2} below is illustrative.) It is most natural and comprehensible in the context
of stable pairs \cite{PT3} (which are dual to the ideal sheaves parametrised
by the Hilbert scheme), but a self contained account using only Hilbert schemes is given
in \cite[Proposition 2]{Sh}. Moreover, it is shown there that the $n_{r,C}$ are \emph{positive}.

\begin{prop} \emph{\cite[Proposition 3.23]{PT3}} \label{2}
For $C$ an $\delta$-nodal curve,
$$
n_{r,C\ }={\delta\choose g-r}
$$
is the number of partial normalisations of $C$ at $g-r$ of its $\delta$ nodes -- i.e. the
number of curves of arithmetic genus $r$ mapping to $C$ by a finite map which is generically an isomorphism. In particular, $n_{g-1,C}=\delta$ is the number of nodes, while
$n_{g,C}=1=n_{g-\delta,C}$ and $n_{i,C}=0$ for $i\not\in\{g-\delta,\ldots,g\}$.
$\hfill\square$
\end{prop}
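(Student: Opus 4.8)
The plan is to compute the generating series $\sum_{i}e(C^{[i]})q^{i}$ for a $\delta$-nodal curve $C$ of arithmetic genus $g$ directly, show it equals $(1-q)^{2g-2-2\delta}(1-q+q^{2})^{\delta}$, and then read off the integers $n_{r,C}$ from the uniqueness clause of Proposition~\ref{1}. The first step is to localise at the nodes. A length-$i$ subscheme of $C$ splits as a disjoint union of a subscheme supported on the smooth locus $C_{sm}$ (an effective divisor, i.e.\ a point of $\Sym^{\bullet}C_{sm}$) and punctual subschemes at the $\delta$ nodes; stratifying $C^{[i]}$ by the lengths at the nodes and using that the topological Euler characteristic is additive over stratifications, multiplicative over products, and agrees with $e_{c}$ over $\C$, one gets
$$\sum_{i}e(C^{[i]})q^{i}=(1-q)^{-e(C_{sm})}\,Z_{\mathrm{node}}(q)^{\delta},\qquad Z_{\mathrm{node}}(q):=\sum_{k\ge0}e\big(\Hilb^{k}_{0}(\{xy=0\})\big)q^{k},$$
where $\Hilb^{k}_{0}$ denotes the punctual Hilbert scheme and we used $\sum_{k}e(\Sym^{k}C_{sm})q^{k}=(1-q)^{-e(C_{sm})}$. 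Since $\bar g=g-\delta$ and $C_{sm}$ is $\bar C$ with $2\delta$ points removed, $e(C_{sm})=(2-2\bar g)-2\delta=2-2g$, so everything reduces to evaluating $Z_{\mathrm{node}}$.

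The heart of the argument is the identity $Z_{\mathrm{node}}(q)=\frac{1-q+q^{2}}{(1-q)^{2}}=1+\frac{q}{(1-q)^{2}}$. I would prove this by stratifying the punctual Hilbert scheme of the node $R=\C[[x,y]]/(xy)$ by the colengths $(a,b)$, with $a,b\ge1$, of the images $\pi_{1}(I)=(x^{a})$ and $\pi_{2}(I)=(y^{b})$ of the ideal $I\subset R$ on the two branches, together with $m:=\dim_{\C}\big((x^{a})\oplus(y^{b})/I\big)$; then $I$ has colength $k=a+b-1+m$. The $m=0$ stratum is a single reduced point $I=(x^{a})\oplus(y^{b})$ — an ideal pulled back from the normalisation — and contributes $\sum_{a,b\ge1}q^{a+b-1}=q/(1-q)^{2}$. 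For $m\ge1$ the image of $I$ in the two-dimensional $\big((x^{a})\oplus(y^{b})\big)/\mathfrak m\big((x^{a})\oplus(y^{b})\big)$ is a line with both coordinates nonzero, so the torus scaling one branch acts on this stratum without fixed points; hence the stratum has Euler characteristic $0$. Adding the empty subscheme gives $Z_{\mathrm{node}}(q)=1+q/(1-q)^{2}$ (sanity checks: $\Hilb^{0}_{0}$ and $\Hilb^{1}_{0}$ are points, $\Hilb^{2}_{0}\cong\PP^{1}$).

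Putting this together, $\sum_{i}e(C^{[i]})q^{i}=(1-q)^{2g-2}\big((1-q+q^{2})/(1-q)^{2}\big)^{\delta}=(1-q)^{2g-2-2\delta}(1-q+q^{2})^{\delta}$; expanding $(1-q+q^{2})^{\delta}=((1-q)^{2}+q)^{\delta}=\sum_{k=0}^{\delta}\binom{\delta}{k}q^{k}(1-q)^{2\delta-2k}$ and multiplying by $q^{1-g}$ exhibits $q^{1-g}\sum_{i}e(C^{[i]})q^{i}=\sum_{r=g-\delta}^{g}\binom{\delta}{g-r}q^{1-r}(1-q)^{2r-2}$. By the uniqueness in Proposition~\ref{1} this forces $n_{r,C}=\binom{\delta}{g-r}$. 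Finally $\binom{\delta}{g-r}$ is the number of subsets of $g-r$ of the $\delta$ nodes, and normalising a chosen subset of $g-r$ nodes yields a curve of arithmetic genus $g-(g-r)=r$ finite and birational over $C$; conversely each such partial normalisation is determined by the set of nodes it resolves, so $n_{r,C}$ counts partial normalisations as stated, and the quoted special values ($n_{g-1,C}=\delta$, $n_{g,C}=n_{g-\delta,C}=1$, $n_{i,C}=0$ for $i\notin\{g-\delta,\dots,g\}$) are the corresponding binomial coefficients.

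I expect the only real obstacle to be the local computation of $Z_{\mathrm{node}}(q)$: setting up the stratification of the punctual Hilbert scheme of the node correctly and recognising that, apart from the ideals pulled back from the normalisation (and the empty subscheme), every stratum carries a fixed-point-free $\C^{\ast}$-action and so contributes nothing to the Euler characteristic. The factorisation over the singular points and the closing binomial bookkeeping are routine.
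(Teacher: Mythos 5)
Your argument is correct. The key local computation checks out: for $J=(x^a)\oplus(y^b)$ one has $\mathfrak m J=(x^{a+1})\oplus(y^{b+1})$, Nakayama rules out a two-dimensional image of $I$ in $J/\mathfrak m J$, and the conditions $\pi_1(I)=(x^a)$, $\pi_2(I)=(y^b)$ rule out the coordinate lines, so every ideal in an $m\ge1$ stratum moves under the branch-scaling $\C^*$; equivalently, the $\C^*$-fixed ideals are exactly the monomial ideals $(x^a,y^b)$, giving $e\big(\Hilb^k_0(\{xy=0\})\big)=k$ and $Z_{\mathrm{node}}=(1-q+q^2)/(1-q)^2$. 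The colength bookkeeping $k=a+b-1+m$ and the factorisation of the generating series over the strata are fine, and reading off $n_{r,C}=\binom{\delta}{g-r}$ from the uniqueness clause of Proposition \ref{1} after expanding $((1-q)^2+q)^\delta$ is legitimate since the functions $q^{1-r}(1-q)^{2r-2}$ have distinct leading orders.

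Your route differs from the one in the paper. The paper does not prove the proposition at all: it cites \cite{PT3}, where the statement is obtained from the stable pairs/Serre duality framework behind Proposition \ref{1}; the in-text discussion after the proposition is only an illustrative decomposition of $C^{[k]}$ into pieces indexed by the partial normalisations $\overline C^{\,i_1\cdots i_j}$, with the residual locus (Cartier divisors meeting the nodes) killed by a fixed-point-free $\C^*$-action, which verifies \eqref{BPS1} directly. You instead work locally: you factor the Hilbert zeta function of $C$ over its singular points, compute the single node factor by the same kind of $\C^*$-argument, and then identify the coefficients algebraically. What the paper's (and \cite{PT3}'s) organisation buys is the geometric meaning of $n_{r,C}$ as a count of partial normalisations, built into the stratification itself; what your version buys is self-containedness and locality -- the whole proposition reduces to the one-node series $1+q/(1-q)^2$, and the multiplicative structure would adapt verbatim to curves with other planar singularities (where only the identification of the local factor changes). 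Your closing remark identifying finite generically-isomorphic covers of a nodal curve with subsets of resolved nodes is the standard fact that such covers correspond to intermediate algebras $\O_C\subseteq\A\subseteq\O_{\bar C}$, and $\O_{\bar C}/\O_C$ is one-dimensional at each node; it would be worth one sentence, but it is not needed for the numerical statement.
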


To give some feeling for these two results, we explain how the resulting formulae for
a $\delta$-nodal curve $C$,
\beq{BPS1}
e\big(C^{[k]}\big)=\sum_{j=0}^k{\delta\choose j}e(\Sym^{k-j}\Sigma_{g-j}),
\eeq
arise from a decomposition of $C^{[k]}$ into Zariski locally closed subsets. The first piece is $\Sym^kC_0\subset C^{[k]}$, where $C_0=C\take\{\mathrm{nodes}\}$ is the smooth locus. Since
$$
e(C_0)=e(\Sigma_g) \quad\mathrm{and\ so}\quad e(\Sym^kC_0)=e(\Sym^k\Sigma_g),
$$
this gives the first term of \eqref{BPS1}. To describe the $j$th piece of the decomposition, fix $j$ distinct nodes $p_{i_1},\cdots,p_{i_j}\in C$ at which we normalise to produce an arithmetic genus $g-j$ curve $\overline C^{i_1\cdots i_j}$ with $\delta-j$ nodes and smooth locus $(\overline C^{i_1\cdots i_j})_0$. Then we have
$$
\Sym^{k-j}(\overline C^{i_1\cdots i_j})_0\into C^{[k]}
$$
by projecting the $(k-j)$-cycle in $(\overline C^{i_1\cdots i_j})_0$ down to $C$ and adding in the nodes $p_{i_1},\cdots,p_{i_j}$. Around the normalised nodes $p_{i_l}$ we have to explain what we mean by this. Describing the node $p_{i_l}$ locally as $\{xy=0\}\subset\C^2$, the partial normalisation has two local branches corresponding to the $x$- and $y$- axes. If the $(k-j)$-cycle has multiplicities $a$ and $b$ at the two preimages of the node on these branches, then we push this down to the length-$(a+b+1)$ subscheme with local ideal $(x^{a+1},y^{b+1})$. That is, we thicken $p_{i_l}$ further by $a$ units down the $x$-axis and $b$ units down the $y$-axis.

Taking the disjoint union over all $n_{g-j,C}$ choices of the nodes $p_{i_1},\cdots,p_{i_j}$ gives the $j$th piece. Since
$$
e((\overline C^{i_1\cdots i_j})_0)=e(\Sigma_{g-j}) \quad\mathrm{and\ so}\quad e(\Sym^k(\overline C^{i_1\cdots i_j})_0)=e(\Sym^k\Sigma_{g-j}),
$$
its Euler characteristic gives the $j$th term in \eqref{BPS1}. What we are missing of $C^{[k]}$ in this decomposition is those subschemes which are Cartier divisors with nonempty intersection with the set $\{p_1,\cdots,p_\delta\}$ of nodes. The set of such subschemes has a $\C^*$-action (with weights $(1,0)$ on $\C^2\supset\{xy=0\}$) with no fixed points, and so zero Euler characteristic.

A simple consequence of Propositions \ref{1} and \ref{2} is the following.

\begin{thm} \label{BPS}
Fix a general linear system $\PP^\delta$ as in Proposition \ref{Got}, with universal curve
$\cC\to\PP^\delta$. Then the number of $\delta$-nodal curves in $\PP^\delta$ is a linear combination of the numbers $e(\Hilb^i(\cC/\PP^\delta)),\ i=1,\ldots,\delta$.

It is the coefficient
$n_{g-\delta}$ of $q^{1-g+\delta}(1-q)^{2g-2\delta-2}$ in the generating series
$$
q^{1-g}\sum_{i=0}^\infty e(\Hilb^i(\cC/\PP^\delta))q^i=
\sum_{r=g-\delta}^g n_r\,q^{1-r}(1-q)^{2r-2}.
$$
\end{thm}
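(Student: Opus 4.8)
The plan is to reduce the relative statement to the fibrewise Proposition \ref{1} via the additivity of the topological Euler characteristic. Over $\C$, $e$ coincides with the compactly supported Euler characteristic and is therefore \emph{motivic}: it is additive over stratifications by locally closed subvarieties and, for any morphism $f\colon X\to B$ of complex varieties, satisfies $e(X)=\int_B e(f^{-1}(b))\,de$, the integral of the constructible function $b\mapsto e(f^{-1}(b))$ against the Euler measure $de$. Applied to $\Hilb^i(\cC/\PP^\delta)\to\PP^\delta$, whose fibre over $[C]$ is $C^{[i]}$, this gives
$$
e\big(\Hilb^i(\cC/\PP^\delta)\big)=\int_{\PP^\delta}e\big(C^{[i]}\big)\,de ,
$$
and, summing against $q^i$ (an identity read coefficientwise in $q$, each term a finite integer),
$$
q^{1-g}\sum_{i\ge0}e\big(\Hilb^i(\cC/\PP^\delta)\big)\,q^i=\int_{\PP^\delta}\Big(q^{1-g}\sum_{i\ge0}e\big(C^{[i]}\big)\,q^i\Big)\,de .
$$

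Next I would invoke Propositions \ref{Got} and \ref{1}. By Proposition \ref{Got} every curve in the general $\PP^\delta$ is reduced, so Proposition \ref{1} applies fibrewise and rewrites the integrand as $\sum_{r=\bar g(C)}^{g}n_{r,C}\,q^{1-r}(1-q)^{2r-2}$. Its explicit recursion expresses $n_{r,C}$ as a fixed $\Z$-linear combination --- with coefficients $e(\Sym^j\Sigma_i)$ depending only on $g$, not on $C$ --- of the numbers $e(C^{[i]})$; stratifying $\Hilb^i(\cC/\PP^\delta)\to\PP^\delta$ shows each $[C]\mapsto e(C^{[i]})$ is constructible, hence so is $[C]\mapsto n_{r,C}$, and $n_r:=\int_{\PP^\delta}n_{r,C}\,de\in\Z$ makes sense. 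Since $\bar g(C)\ge g-\delta$ for all $C\in\PP^\delta$ (Proposition \ref{Got} again), $n_{r,C}=0$ for $r<g-\delta$, so only $r=g-\delta,\dots,g$ contribute and interchanging the finite sum with the integral yields
$$
q^{1-g}\sum_{i\ge0}e\big(\Hilb^i(\cC/\PP^\delta)\big)\,q^i=\sum_{r=g-\delta}^{g}n_r\,q^{1-r}(1-q)^{2r-2} .
$$
The $\delta+1$ series $q^{1-r}(1-q)^{2r-2}$ have distinct leading powers $q^{1-r}$, hence are linearly independent, so the $n_r$ are well defined; in particular so is $n_{g-\delta}$.

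It then remains to identify $n_{g-\delta}$ and to note its shape. The constructible function $[C]\mapsto n_{g-\delta,C}$ vanishes unless $\bar g(C)=g-\delta$; by Proposition \ref{Got} these $C$ are exactly the $\delta$-nodal curves, finitely many and each a reduced point of $\PP^\delta$, where $n_{g-\delta,C}={\delta\choose\delta}=1$ by Proposition \ref{2}. Hence $n_{g-\delta}=\int_{\PP^\delta}n_{g-\delta,C}\,de$ is the number of $\delta$-nodal curves. Finally, unwinding the recursion of Proposition \ref{1} writes $n_{g-\delta,C}$ as a universal combination of $e(C^{[i]})$ for $i=1,\dots,\delta$ (plus a universal constant coming from the trivial term $e(C^{[0]})=1$); integrating over $\PP^\delta$ turns these into $e(\Hilb^i(\cC/\PP^\delta))$, giving the linearity claim.

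The real work is in the first paragraph: one must legitimately use the motivic behaviour of $e$ over $\C$ --- additivity over stratifications, multiplicativity in fibrations, and the equality $e=e_c$ --- together with the constructibility of $[C]\mapsto e(C^{[i]})$ coming from a stratification of the relative Hilbert scheme. Granting this, everything else is bookkeeping with Propositions \ref{Got}, \ref{1} and \ref{2}. A minor point to keep straight is that all the generating-series identities are to be read coefficientwise in $q$, so no rationality or convergence input is needed beyond what Proposition \ref{1} already supplies fibrewise.
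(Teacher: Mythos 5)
Your proposal is correct and follows essentially the same route as the paper: the paper's proof likewise computes $e(\Hilb^i(\cC/\PP^\delta))$ fibrewise by stratifying $\PP^\delta$, and then uses Propositions \ref{Got}, \ref{1} and \ref{2} to see that the only contributions to $n_{g-\delta}$ are $1$ for each $\delta$-nodal curve. You simply spell out the Euler-measure/constructibility bookkeeping and the linear independence of the series $q^{1-r}(1-q)^{2r-2}$ that the paper leaves implicit.
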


\begin{proof}
Compute the Euler characteristics fibrewise, stratifying $\PP^\delta$ by the topological
type of the curve. By Proposition \ref{1}, $n_{g-\delta,C}$ vanishes for all but the curves $C$ in our linear
system with geometric genus $g-\delta$. By Proposition \ref{Got} these are the $\delta$-nodal
curves, for which $n_{g-\delta,C}=1$ by Proposition \ref{2}. Therefore the only contributions to the coefficient
$n_{g-\delta}$ of the above generating series are precisely 1 for each $\delta$-nodal curve.
\end{proof}

\section{Tautological integrals}
These relative Hilbert schemes are relatively easy to compute with. Let $\PP:=\PP(H^0(L))$
denote the complete linear system. The sections $H^0(L)\otimes H^0(L)^*$ of $L\boxtimes\O(1)$ on $S\times\PP$ include the canonical section $\id_{H^0(L)}$. Pull it back to
$S\times S^{[i]}\times\PP$, restrict to the universal subscheme $\mathcal Z_i\times\PP$, and push down to
$S^{[i]}\times\PP$ to give the tautological section of $L^{[i]}\boxtimes\O(1)=:L^{[i]}(1)$.
(Thinking of it as a section of $\hom(\O(-1),L^{[i]})$ on $S^{[i]}\times\PP$, at a point
$(Z,[s])$ it maps $s\in\O(-1)$ to $s|_Z\in H^0(L|_Z)=L^{[i]}|_Z$.)

This tautological section has scheme theoretic zero locus
$$
\Hilb^i(\cC/\PP)\subset S^{[i]}\times\PP.
$$
This is smooth, fibring over $S^{[i]}$ with fibres $\PP\big(\!\ker(
H^0(L)\to H^0(L|_Z)\big)$ of constant rank, because $L$ is $(i-1)$-very ample. Since its codimension equals the
rank $i$ of $L^{[i]}(1)$, the tautological section was in fact transverse and
$[\Hilb^i(\cC/\PP)]$ is Poincar\'e dual to $c_i(L^{[i]}(1))$. Intersecting general hyperplanes in $\PP$, by Bertini we retain smoothness when $\PP$ is replaced by a general $\PP^{\delta}$.

It follows that the Euler characteristic
of the relative Hilbert scheme -- the integral of $c_\bull(T\Hilb^i(\cC/\PP^\delta))$
over $\Hilb^i(\cC/\PP^\delta)$ -- can be computed as
$$
\int_{\Hilb^i(\cC/\PP^\delta)}
\frac{c_\bull(T(S^{[i]}\times\PP^\delta))}{c_{\bull}(L^{[i]}(1))}\ =\,
\int_{S^{[i]}\times\PP^\delta}c_i(L^{[i]}(1))
\frac{c_\bull(T(S^{[i]}\times\PP^\delta))}{c_{\bull}(L^{[i]}(1))}\,.
$$

\begin{thm}
Suppose that $L$ is $\delta$-very ample. Then in a
$\PP^\delta$ linear system, general in the sense of Proposition \ref{Got},
the number of $\delta$-nodal curves is a polynomial of degree $\delta$ in $L^2,\,L.K_S,\,K_S^2$ and $c_2(S)$.
\end{thm}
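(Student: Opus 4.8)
\emph{Proof proposal.} The plan is to feed Theorem~\ref{BPS} into the tautological integral formula just established and invoke the universality theorem of Ellingsrud--G\"ottsche--Lehn \cite{EGL}, keeping track of degrees throughout. The input from \cite{EGL} is that integrals of polynomials in the Chern classes of the tautological bundle $L^{[i]}$ and of $T S^{[i]}$ over $S^{[i]}$ are universal polynomials in $L^2,\,L.K_S,\,K_S^2,\,c_2(S)$; the remaining work is to organise the combinatorics so that only such integrals appear, and to check that the resulting polynomial has degree~$\le\delta$.

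First I would extract $n_{g-\delta}$ explicitly. By Theorem~\ref{BPS} the number of $\delta$-nodal curves is $n_{g-\delta}$, and multiplying the identity of that theorem by $q^{g-1}$ gives
$$
\sum_{i\ge0}e\big(\Hilb^i(\cC/\PP^\delta)\big)\,q^i=\sum_{k=0}^{\delta}n_{g-k}\,q^k(1-q)^{2g-2-2k}.
$$
Comparing the coefficients of $q^0,\dots,q^\delta$ expresses the vector $\big(e(\Hilb^i(\cC/\PP^\delta))\big)_{i=0}^\delta$ as a unipotent lower-triangular matrix, with $(i,k)$-entry $(-1)^{i-k}\binom{2g-2-2k}{i-k}$ a polynomial of degree $i-k$ in $g$, applied to $(n_{g-k})_{k=0}^\delta$. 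Inverting this matrix gives $n_{g-\delta}=\sum_{i=0}^\delta d_i(g)\,e(\Hilb^i(\cC/\PP^\delta))$ with each $d_i(g)$ a polynomial in $g$ of degree $\le\delta-i$; since $2g-2=L.(L+K_S)$ depends linearly on the four numbers, each $d_i$ is then a polynomial of degree $\le\delta-i$ in $L^2,\,L.K_S,\,K_S^2,\,c_2(S)$.

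Next I would treat each Euler characteristic. For $i\le\delta$ the line bundle $L$ is $(i-1)$-very ample, so the relative Hilbert scheme is the smooth transverse zero locus discussed above and $e(\Hilb^i(\cC/\PP^\delta))=\int_{S^{[i]}\times\PP^\delta}c_i(L^{[i]}(1))\,c_\bull(T(S^{[i]}\times\PP^\delta))/c_\bull(L^{[i]}(1))$. Writing $h=c_1(\O_{\PP^\delta}(1))$, so that $h^{\delta+1}=0$ and $c_\bull(T\PP^\delta)=(1+h)^{\delta+1}$, the integrand is a polynomial in $h$ whose coefficients are polynomials in the Chern classes of $L^{[i]}$ and of $T S^{[i]}$; integrating over $\PP^\delta$ extracts the coefficient of $h^\delta$ and leaves an integral over $S^{[i]}$ of such a tautological class. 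By \cite{EGL} each of these is a universal polynomial in $L^2,\,L.K_S,\,K_S^2,\,c_2(S)$, so, combined with the previous paragraph, the number of $\delta$-nodal curves is a universal polynomial in these four numbers.

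The hard part --- and, I expect, the only genuinely delicate point --- is the degree bound. Here one uses that only the component of the integrand of top cohomological degree $2i+\delta$ on $S^{[i]}\times\PP^\delta$ contributes, and that its coefficient of $h^\delta$ is a tautological class on $S^{[i]}$ of cohomological degree exactly $2i$. Since each of $L^2,\,L.K_S,\,K_S^2,\,c_2(S)$ arises as an integral over $S$ of a class of cohomological degree $2$, while $c_1(L^{[i]})$ and the Chern classes of $T S^{[i]}$ carry their cohomological degree as weight, the \cite{EGL} polynomial attached to a cohomological degree $2i$ tautological class on $S^{[i]}$ should have degree $\le i$ in the four numbers. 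Granting this, $d_i(g)\,e(\Hilb^i(\cC/\PP^\delta))$ has degree $\le(\delta-i)+i=\delta$, and so does their sum $n_{g-\delta}$; the degree is then exactly $\delta$ because the top-degree term does not vanish (classically the leading coefficient is a nonzero multiple of $(L^2)^\delta$). The point needing care is precisely this homogeneity count in \cite{EGL}: one must be sure the degree bound is uniform in $h$, i.e. that passing from the top cohomological component on $S^{[i]}\times\PP^\delta$ to its $h^\delta$-part, a degree-$2i$ class on $S^{[i]}$, never inflates the degree in the four numbers. Given the setup already in place, everything else is a formal consequence of Theorem~\ref{BPS}, the integral formula above, and \cite{EGL}.
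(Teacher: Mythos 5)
Up to the degree bound $\le\delta$ your argument is essentially the paper's: you invert the unipotent triangular system coming from Theorem \ref{BPS} to write $n_{g-\delta}=\sum_{i=0}^\delta d_i(g)\,e(\Hilb^i(\cC/\PP^\delta))$ with $\deg d_i\le\delta-i$ in $g$, you extract the $h^\delta$-coefficient (the paper's $\omega^\delta$ in \eqref{mess}) to reduce each Euler characteristic to a tautological integral over $S^{[i]}$, and you invoke \cite{EGL}. The degree-$\le i$ bound you flag as ``needing care'' is exactly what the EGL recursion provides: it rewrites the integral as an integral over $S^i$ of classes pulled back from the $S$ factors and from diagonals, and a monomial of cohomological degree $4i$ integrates to a product of at most $i$ of the four intersection numbers, so your weight count is sound and matches the paper. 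One small omission: the theorem's $\PP^\delta$ is only assumed general in the sense of Proposition \ref{Got}, while your integral formula needs Bertini-genericity; the paper reconciles the two by observing that the count is the intersection of $\PP^\delta$ with a fixed quasiprojective locus of $\delta$-nodal curves, so $\PP^\delta$ may be perturbed to satisfy both conditions without changing the number.

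The genuine gap is the claim that the degree is \emph{exactly} $\delta$. You justify the nonvanishing of the degree-$\delta$ part by saying that ``classically the leading coefficient is a nonzero multiple of $(L^2)^\delta$'', but this is an appeal to external knowledge of precisely the sort of statement being established, and within your framework it is not available: the coefficient of $(L^2)^\delta$ receives contributions from every $i$, as products of the degree-$(\delta-i)$ polynomials $d_i(g)$ (which involve $L^2$ and $L.K_S$ through $g$) with the degree-$\le i$ integrals over $S^{[i]}$, and without an explicit computation you cannot rule out cancellation among these. The paper avoids this by tracking $c_2(S)^\delta$ instead: $c_2(S)$ does not appear in $g$, hence not in any $d_i$, and the only source of a pure power $c_2(S)^i$ in the $S^{[i]}$-integral is the single term $(\delta-i+1)c_{2i}(T(S^{[i]}))$ in \eqref{mess}, which contributes $c_2(S)^i$ with coefficient $(\delta-i+1)/i!$. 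Since $e(\Hilb^\delta(\cC/\PP^\delta))$ enters $n_{g-\delta}$ with coefficient $d_\delta=1$, the term $c_2(S)^\delta/\delta!$ appears and cannot cancel against anything else, giving degree exactly $\delta$. You need an argument of this kind (or an honest computation of one top-degree coefficient) to close the proof.
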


\begin{proof}
By Proposition \ref{Got}, we want to compute the intersection of $\PP^\delta$ with the quasiprojective variety of $\delta$-nodal curves in $\PP$. Thus we may perturb $\PP^\delta$ without changing this number to 
make it general in the sense of both Proposition \ref{Got} and the Bertini theorem above. So we can assume that $\Hilb^i(\cC/\PP)$ is smooth.

By the above computation, $e(\Hilb^i(\cC/\PP^\delta))$ is then the integral over $S^{[i]}$ of a polynomial in the Chern classes of $L^{[i]}$ and $S^{[i]}$: the coefficient of $\omega^\delta$ in
\beq{mess}
\frac{c_\bull(T(S^{[i]}))(1+\omega)^{\delta+1}\sum_{j=0}^i\omega^jc_{i-j}(L^{[i]})}
{\sum_{j=0}^i(1+\omega)^jc_{i-j}(L^{[i]})}\,.
\eeq
The recursion of \cite{EGL} applied $i$ times turns this into an integral over $S^i$ of a polynomial in $c_1(L),\,c_1(S),\,c_2(S)$ (pulled back from different $S$ factors)
and $\Delta_*1,\,\Delta_*c_1(S),\,\Delta_*c_1(S)^2,\,\Delta_*c_2(S)$ (pulled back from different $S\times
S$ factors), where $\Delta\colon S\into S\times S$ is the diagonal. The result is a degree
$\le i$ polynomial in $L^2,\,L.K_S,\,K_S^2$ and $c_2(S)$. 

By Theorem \ref{BPS} the number $n_{g-\delta}$ of $\delta$-nodal curves is a linear combination
of these degree $\le i$ polynomials, for $0\le i\le\delta$. Their coefficients are polynomials of degree $\le\delta-i$ in $g=1+(L^2+L.K_S)/2$. Thus it has total degree $\le\delta$.
In fact one easily sees from \cite{EGL} that our integral over $S^{[i]}$ produces a single term in $c_2(S)^i$ (the coefficient being $(\delta-i+1)/i!$) arising from the $(\delta-i+1)c_{2i}(T(S^{[i]}))$ term in \eqref{mess}. Therefore $n_{g-\delta}$ contains a term $c_2(S)^\delta/\delta!$ which does not cancel with any other terms, so the polynomial
has degree precisely $\delta$.
\end{proof}

The recursion is constructive, though not terribly efficient. Our computer programme only calculates up to 4 nodes without trouble, finding agreement with \cite{KP1, V} (after correcting a small sign error in \cite{EGL}).


\end{document}